\providecommand{\U}[1]{\protect\rule{.1in}{.1in}}
\newtheorem{theorem}{Theorem}[section]
\newtheorem{lemma}[theorem]{Lemma}
\theoremstyle{definition}
\newtheorem{example}[theorem]{Example}
\theoremstyle{remark}
\numberwithin{equation}{section}
\begin{document}
\title[A generalization of the Banach-Steinhaus theorem]{A generalization of the Banach-Steinhaus theorem for finite part limits}
\author{Ricardo Estrada}
\address{R. Estrada, Department of Mathematics\\
Louisiana State University\\
Baton Rouge, LA 70803\\
U.S.A.}
\email{restrada@math.lsu.edu}
\author{Jasson Vindas}
\address{J. Vindas, Department of Mathematics\\
Ghent University\\
Krijgslaan 281 Gebouw S22\\
B 9000 Gent\\
Belgium}
\email{jvindas@cage.Ugent.be }
\thanks{R. Estrada gratefully acknowledges support from NSF, through grant number 0968448.}
\keywords{Finite part limits, Hadamard finite part, Banach-Steinhaus theorem}
\subjclass[2010]{ 46A04, 46A13}

\begin{abstract}
It is well known, as follows from the Banach-Steinhaus theorem, that if a
sequence $\left\{  y_{n}\right\}  _{n=1}^{\infty}$ of linear continuous
functionals in a Fr\'{e}chet space converges pointwise to a linear functional
$Y,$ $Y\left(  x\right)  =\lim_{n\rightarrow\infty}\left\langle y_{n}%
,x\right\rangle $ for all $x,$ then $Y$ is actually continuous. In this
article we prove that in a Fr\'{e}chet space\ the continuity of $Y$ still
holds if $Y$ is the \emph{finite part} of the limit of $\left\langle
y_{n},x\right\rangle $ as $n\rightarrow\infty.$ We also show that the
continuity of finite part limits holds for other classes of topological vector
spaces, such as \textsl{LF}-spaces, \textsl{DFS}-spaces, and \textsl{DFS}%
$^{\ast}$-spaces,\ and give examples where it does not hold.

\end{abstract}
\maketitle

\section{Introduction}

Let $X$ be a topological vector space over $K$, $K$ being $\mathbb{R}$ or
$\mathbb{C}.$ We denote as $X^{\prime}$ the dual space, that is, the space of
continuous linear functionals on $X;$ the evaluation of $y\in X^{\prime}$ on
$x\in X$ will be denoted as $\left\langle y,x\right\rangle $ or as $y\left(
x\right)  ;$ we shall denote as $X_{\mathrm{al}}^{\prime}$ the algebraic dual
of $X,$ but if $z\in X_{\mathrm{al}}^{\prime}$ we denote evalutions as
$z\left(  x\right)  $ only.

Let $\left\{  y_{n}\right\}  _{n=1}^{\infty}$ be a sequence of elements of
$X^{\prime}$ and suppose that%
\begin{equation}
\lim_{n\rightarrow\infty}\left\langle y_{n},x\right\rangle =Y\left(  x\right)
\,, \label{I.1}%
\end{equation}
exists for each $x\in X,$ thus defining a function $Y:X\rightarrow K.$ It is
clear that $Y$ is linear, an element of $X_{\mathrm{al}}^{\prime},$ and simple
examples show that $Y$ does not have to be continuous, that is, maybe $Y\notin
X^{\prime}.$ However, it is well known \cite{Horvath, Treves}\ that if $X$ is
barreled, in particular if $X$ is a Fr\'{e}chet space or an \textsl{LF} space,
then one must have that $Y\in X^{\prime};$ this result is quite important in
the theory of distributions since the usual spaces of test functions are
barreled and thus (\ref{I.1}) provides a method, rather frequently employed,
to construct new distributions as limits.

Our aim is to consider the continuity of $Y$ in case the standard\footnote{We
shall consider more complicated finite part limits later on.} \emph{finite
part} of the limit%
\begin{equation}
\mathrm{F.p.}\lim_{n\rightarrow\infty}\left\langle y_{n},x\right\rangle
=Y\left(  x\right)  \,, \label{I.2}%
\end{equation}
exists for each $x\in X.$ We will show that $Y\in X^{\prime}$ in case $X$ is a
Fr\'{e}chet space or in case it is an inductive limit of Fr\'{e}chet spaces.
Naturally several distributions are defined as finite parts, so such a result
would be very useful.

The meaning of (\ref{I.2}) is that for each $x\in X$ there is $k=k_{x}
\in\mathbb{N}_{0}=\mathbb{N}\cup\{0\},$ exponents $\alpha_{1}>\cdots>\alpha_{k}>0,$ scalars
$R_{\alpha_{1}}\left(  x\right)  ,\ldots,R_{\alpha_{k}}\left(  x\right)  \in
K\setminus\left\{  0\right\}  ,$ and $z_{n}\left(  x\right)  \in K$ for
$n\geq0$ such that%
\begin{equation}
\left\langle y_{n},x\right\rangle =n^{\alpha_{1}}R_{\alpha_{1}}\left(
x\right)  +\cdots+n^{\alpha_{k}}R_{\alpha_{k}}\left(  x\right)  +z_{n}\left(
x\right)  \,, \label{I.3}%
\end{equation}
where%
\begin{equation}
\lim_{n\rightarrow\infty}z_{n}\left(  x\right)  =Y\left(  x\right)  \,.
\label{I.4}%
\end{equation}
Observe that $k_{x}$ could be $0,$ meaning that $\left\langle y_{n}%
,x\right\rangle =z_{n}\left(  x\right)  $ converges to $Y\left(  x\right)  . $

We call $n^{\alpha_{1}}R_{\alpha_{1}}\left(  x\right)  +\cdots+n^{\alpha_{k}%
}R_{\alpha_{k}}\left(  x\right)  $ the \emph{infinite} part of $\left\langle
y_{n},x\right\rangle $ as $n\rightarrow\infty$ and $z_{n}\left(  x\right)  $
the \emph{finite} part. Clearly the infinite and finite part, if they exist,
are \emph{uniquely determined, }so that the finite part of the limit, if it
exists, is likewise uniquely determined.

It is important to point out that maybe $\sup\left\{  k_{x}:x\in X\right\}
=\infty$ and that the set of exponents,%
\begin{equation}
\bigcup_{x\in X}\left\{  \alpha_{j}:1\leq j\leq k_{x}\right\}  \,, \label{I.5}%
\end{equation}
does \emph{not} have to be finite. We shall show that when $X$ is a
Fr\'{e}chet space then $\sup\left\{  k_{x}:x\in X\right\}  <\infty$ and
actually the set (\ref{I.5})\ is finite, but give examples in other types of
spaces where these results do not hold.

We shall also show that the $R_{\alpha}$ admit extensions as elements of the
algebraic dual $X_{\mathrm{al}}^{\prime},$ and show that while in general they
are not continuous, they must belong to $X^{\prime}$ when $X$ is a Fr\'{e}chet
space or an inductive limit of Fr\'{e}chet spaces.

The plan of the article is as follows. In Section \ref{Sect: General results}
we give some basic facts about finite parts that hold in any topological
vector space. The central part of the article is Section
\ref{Section: Finite parts in a Frechet space}, where we study finite parts in
a Fr\'{e}chet space. Extensions to more general finite parts and to more
general topological vector spaces are considered in Sections
\ref{Section: More General Finite Parts}\ and
\ref{Section: Other types of topological vector spaces}, respectively. Finally
we present several illustrations in Section \ref{Sect: Examples}.

\section{General results\label{Sect: General results}}

We shall first consider several results that hold in any topological vector space.

Thus let $\left\{  y_{n}\right\}  _{n=1}^{\infty}$ be a sequence of elements
of the dual space $X^{\prime}$ of the topological vector space $X,$ and
suppose that for each $x\in X$ the finite part of the limit
\begin{equation}
Y\left(  x\right)  =\mathrm{F.p.}\lim_{n\rightarrow\infty}\left\langle
y_{n},x\right\rangle \,, \label{G.0}%
\end{equation}
exists, or, in other words, that the evaluation $\left\langle y_{n}%
,x\right\rangle $ can be decomposed as
\begin{equation}
\left\langle y_{n},x\right\rangle =u_{n}\left(  x\right)  +z_{n}\left(
x\right)  \,, \label{G.1}
\end{equation}
with the infinite part of the form
\begin{equation}
u_{n}\left(  x\right)  =n^{\alpha_{1}}R_{\alpha_{1}}\left(  x\right)
+\cdots+n^{\alpha_{k}}R_{\alpha_{k}}\left(  x\right)  \,, \label{G.2}%
\end{equation}
where $\alpha_{1}>\cdots>\alpha_{k}>0,$ and $R_{\alpha_{1}}\left(  x\right)
,\ldots,R_{\alpha_{k}}\left(  x\right)  \in K\setminus\left\{  0\right\}  ,$
and with the finite part, $z_{n}\left(  x\right)  ,$ such that the limit%
\begin{equation}
Y\left(  x\right)  =\lim_{n\rightarrow\infty}z_{n}\left(  x\right)
\label{G.3}%
\end{equation}
exists.

The following result is very easy to prove, but it is also very
important.\smallskip

\begin{lemma}
\label{Lemma 1}The decomposition (\ref{G.1}) in finite and infinite parts is
unique.\smallskip
\end{lemma}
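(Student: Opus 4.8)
The plan is to argue by contradiction: suppose $\langle y_n, x\rangle$ has two decompositions
$\langle y_n, x\rangle = u_n(x) + z_n(x) = \tilde u_n(x) + \tilde z_n(x)$
of the form described in \eqref{G.1}--\eqref{G.3}, and derive that they must coincide. Subtracting, we get
$u_n(x) - \tilde u_n(x) = \tilde z_n(x) - z_n(x)$,
where the right-hand side converges (to $Y(x) - Y(x) = 0$, in fact), while the left-hand side is a generalized polynomial in $n$ whose exponents lie in the finite set $\{\alpha_1 > \cdots > \alpha_k\} \cup \{\tilde\alpha_1 > \cdots > \tilde\alpha_{\tilde k}\}$, all of which are strictly positive. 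So it suffices to prove the following elementary fact: if $p(n) = \sum_{j} c_j\, n^{\beta_j}$ with $\beta_1 > \cdots > \beta_m > 0$ and the sequence $\{p(n)\}_{n=1}^\infty$ converges in $K$, then all the coefficients $c_j$ vanish.

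The key step is this scalar lemma about exponential-polynomial sequences, which I would prove by a standard ``peeling off the leading term'' induction on $m$. If $c_1 \neq 0$, then $p(n)/n^{\beta_1} \to c_1 \neq 0$ since every other term $c_j n^{\beta_j}/n^{\beta_1} = c_j n^{\beta_j - \beta_1} \to 0$ (because $\beta_j - \beta_1 < 0$); but $p(n)$ is bounded (being convergent) and $n^{\beta_1} \to \infty$, forcing $p(n)/n^{\beta_1} \to 0$, a contradiction. Hence $c_1 = 0$; removing that term and repeating the argument on $\sum_{j \geq 2} c_j n^{\beta_j}$ gives $c_2 = 0$, and so on, until all coefficients vanish. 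Applying this to $u_n(x) - \tilde u_n(x)$ shows that, after matching up common exponents, the coefficient attached to each power $n^\beta$ is the same in both representations; since in \eqref{G.2} the scalars $R_{\alpha_j}(x)$ are required to be \emph{nonzero}, the two lists of exponents and coefficients are literally identical, i.e.\ $u_n(x) = \tilde u_n(x)$ for all $n$, and consequently $z_n(x) = \tilde z_n(x)$ as well. This is exactly the claimed uniqueness.

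I do not expect any real obstacle here — the whole content is the one-line asymptotic comparison $n^{\beta_j - \beta_1} \to 0$ — but the one point that deserves a careful word is the bookkeeping when the two decompositions have overlapping but not identical exponent sets: one should phrase the difference $u_n(x) - \tilde u_n(x)$ as a single sum $\sum_j c_j n^{\beta_j}$ over the \emph{union} of the exponents (with $c_j$ possibly zero a priori), apply the scalar lemma to conclude every $c_j = 0$, and only then invoke the nonvanishing convention in \eqref{G.2} to conclude that the exponents actually appearing in $u_n$ and in $\tilde u_n$ are the same. Note also that the case $k = 0$ (or $\tilde k = 0$) is automatically covered, since then $u_n(x) \equiv 0$ and the argument shows $\tilde u_n(x) \equiv 0$ too.
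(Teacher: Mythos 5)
Your proof is correct. The paper itself gives no proof of Lemma \ref{Lemma 1} (it is dismissed as ``very easy to prove''), and your argument --- subtract the two decompositions, write the difference of the infinite parts as a single sum over the union of the exponent sets, and kill the coefficients one by one by dividing by the leading power $n^{\beta_1}\to\infty$ while the difference of the finite parts stays bounded --- is exactly the standard argument the authors intend, with the bookkeeping about overlapping exponent sets handled properly. (One tiny quibble: a priori the two finite parts could converge to different limits, so the difference $\tilde z_n(x)-z_n(x)$ should only be asserted to \emph{converge}, not to converge to $0$; but your argument uses only boundedness, so nothing is affected.)
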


It is convenient to define $R_{\alpha}\left(  x\right)  $ for all $\alpha>0$
and all $x\in X.$ We just put $R_{\alpha}\left(  x\right)  =0$ if $\alpha$ is
not one of the exponents $\alpha_{1},\ldots,\alpha_{k}$ in the expression of
the infinite part of $\left\langle y_{n},x\right\rangle .$ This allows us to
rewrite (\ref{G.2}) as%
\begin{equation}
u_{n}\left(  x\right)  =\sum_{\alpha>0}n^{\alpha}R_{\alpha}\left(  x\right)
\,, \label{G.4}%
\end{equation}
since only a finite number of terms of the uncountable sum do not
vanish.\smallskip

\begin{lemma}
\label{Lemma 2}If $Y\left(  x\right)  =\mathrm{F.p.}\lim_{n\rightarrow\infty
}\left\langle y_{n},x\right\rangle $ exists for all $x\in X$ then $Y$ is
linear: $Y\in X_{\mathrm{al}}^{\prime}.$
\end{lemma}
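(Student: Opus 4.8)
The plan is to obtain the linearity of $Y$ as an immediate consequence of the uniqueness of the finite/infinite part splitting established in Lemma \ref{Lemma 1}. Fix $x,y\in X$ and scalars $a,b\in K$. Since each $y_{n}$ is itself linear, $\langle y_{n},ax+by\rangle=a\langle y_{n},x\rangle+b\langle y_{n},y\rangle$, and feeding in the decompositions $\langle y_{n},x\rangle=u_{n}(x)+z_{n}(x)$ and $\langle y_{n},y\rangle=u_{n}(y)+z_{n}(y)$, which exist by hypothesis, one gets
\begin{equation*}
\langle y_{n},ax+by\rangle=\bigl(au_{n}(x)+bu_{n}(y)\bigr)+\bigl(az_{n}(x)+bz_{n}(y)\bigr)\,.
\end{equation*}

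The next step is to recognize the two brackets on the right as, respectively, an admissible infinite part and an admissible finite part. Writing $u_{n}$ in the uniform form (\ref{G.4}), one has $au_{n}(x)+bu_{n}(y)=\sum_{\alpha>0}n^{\alpha}\bigl(aR_{\alpha}(x)+bR_{\alpha}(y)\bigr)$, a sum with only finitely many nonzero summands; after deleting the exponents $\alpha$ for which the coefficient $aR_{\alpha}(x)+bR_{\alpha}(y)$ vanishes (which can happen through cancellation), what remains is a finite sum with strictly decreasing positive exponents and nonzero scalar coefficients — exactly the shape (\ref{G.2}) of an infinite part (the empty sum if every coefficient cancels). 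At the same time $az_{n}(x)+bz_{n}(y)\to aY(x)+bY(y)$, so the second bracket is a legitimate finite part converging to that value.

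Therefore the displayed identity exhibits a decomposition of $\langle y_{n},ax+by\rangle$ into an infinite part and a convergent part; by Lemma \ref{Lemma 1} it must agree with the canonical one, and equating the limits of the finite parts yields $Y(ax+by)=aY(x)+bY(y)$, so $Y\in X_{\mathrm{al}}^{\prime}$. The only place that needs a bit of care — and the only thing standing between the displayed line and the conclusion — is the bookkeeping in the previous paragraph: one must check that a scalar linear combination of infinite parts is again an infinite part in the normalized sense once degenerate terms are discarded, which is what allows Lemma \ref{Lemma 1} to be invoked; everything else is routine.
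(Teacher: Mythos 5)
Your proposal is correct and follows exactly the paper's own argument: expand $\langle y_{n},ax+by\rangle$ by linearity of $y_{n}$, observe that the resulting combination of infinite parts is again an infinite part (in the normalized form (\ref{G.2})) and the combination of finite parts converges, then invoke the uniqueness of Lemma \ref{Lemma 1} and pass to the limit. Your extra remark about discarding exponents whose coefficients cancel is a small point of care that the paper leaves implicit, but it is the same proof.
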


\begin{proof}
Indeed, if $x_{1},x_{2}\in X$ and $c\in K,$ then $\left\langle y_{n}%
,x_{1}+cx_{2}\right\rangle $ admits the decomposition%
\begin{align*}
\left\langle y_{n},x_{1}+cx_{2}\right\rangle  &  =\left\langle y_{n}%
,x_{1}\right\rangle +c\left\langle y_{n},x_{2}\right\rangle \\
&  =\left(  u_{n}\left(  x_{1}\right)  +cu_{n}\left(  x_{2}\right)  \right)
+\left(  z_{n}\left(  x_{1}\right)  +cz_{n}\left(  x_{2}\right)  \right)  \,.
\end{align*}
Since%
\begin{equation}
u_{n}\left(  x_{1}\right)  +cu_{n}\left(  x_{2}\right)  =\sum_{\alpha
>0}n^{\alpha}\left(  R_{\alpha}\left(  x_{1}\right)  +cR_{\alpha}\left(
x_{2}\right)  \right)  \,, \label{G.5}%
\end{equation}
has the form of an infinite part, the Lemma \ref{Lemma 1} yields%
\[
u_{n}\left(  x_{1}+cx_{2}\right)  =u_{n}\left(  x_{1}\right)  +cu_{n}\left(
x_{2}\right)  \,,\ \ \ \ \ \ z_{n}\left(  x_{1}+cx_{2}\right)  =z_{n}\left(
x_{1}\right)  +cz_{n}\left(  x_{2}\right)  \,,
\]
and consequently $Y\left(  x_{1}+cx_{2}\right)  $ equals%
\[
\lim_{n\rightarrow\infty}z_{n}\left(  x_{1}+cx_{2}\right)  =\lim
_{n\rightarrow\infty}\left(  z_{n}\left(  x_{1}\right)  +cz_{n}\left(
x_{2}\right)  \right)  \,,
\]
that is, $Y\left(  x_{1}\right)  +cY\left(  x_{2}\right)  .$\smallskip
\end{proof}

If we now use the fact that $R_{\alpha}\left(  x\right)  =\mathrm{F.p.}%
\lim_{n\rightarrow\infty}n^{-\alpha}\left\langle y_{n},x\right\rangle ,$ or
employ (\ref{G.5}), we immediately obtain the ensuing result.\smallskip

\begin{lemma}
\label{Lemma 3}For any $\alpha>0$ the function $R_{\alpha}$ is linear,
$R_{\alpha}\in X_{\mathrm{al}}^{\prime}.$\smallskip
\end{lemma}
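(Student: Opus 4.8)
The plan is to reduce the statement to Lemma \ref{Lemma 2}, applied not to $\left\{y_{n}\right\}_{n=1}^{\infty}$ but to the rescaled sequence $\left\{n^{-\alpha}y_{n}\right\}_{n=1}^{\infty}$, which again lies in $X^{\prime}$ because $X^{\prime}$ is a vector space. First I would check that for every $x\in X$ the finite part of $\lim_{n\rightarrow\infty}n^{-\alpha}\left\langle y_{n},x\right\rangle$ exists and equals $R_{\alpha}\left(x\right)$. Dividing the decomposition given by (\ref{G.1}) and (\ref{G.4}) by $n^{\alpha}$ gives
\[
n^{-\alpha}\left\langle y_{n},x\right\rangle=\sum_{\beta>\alpha}n^{\beta-\alpha}R_{\beta}\left(x\right)+R_{\alpha}\left(x\right)+\sum_{0<\beta<\alpha}n^{\beta-\alpha}R_{\beta}\left(x\right)+n^{-\alpha}z_{n}\left(x\right)\,,
\]
both sums being finite. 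The first sum has the form of an infinite part, since each exponent $\beta-\alpha$ is positive, whereas the remaining terms converge to $R_{\alpha}\left(x\right)$: the second sum because each $\beta-\alpha<0$, and $n^{-\alpha}z_{n}\left(x\right)\rightarrow0$ because $z_{n}\left(x\right)$ is bounded, being convergent, and $\alpha>0$. Thus this is a decomposition of $n^{-\alpha}\left\langle y_{n},x\right\rangle$ of the type (\ref{G.1})--(\ref{G.3}), and by Lemma \ref{Lemma 1} it follows that $\mathrm{F.p.}\lim_{n\rightarrow\infty}n^{-\alpha}\left\langle y_{n},x\right\rangle=R_{\alpha}\left(x\right)$.

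Applying Lemma \ref{Lemma 2} to $\left\{n^{-\alpha}y_{n}\right\}_{n=1}^{\infty}$ then yields at once that $R_{\alpha}\in X_{\mathrm{al}}^{\prime}$. A more direct alternative is to revisit the proof of Lemma \ref{Lemma 2}: there one already derived, from Lemma \ref{Lemma 1}, that $u_{n}\left(x_{1}+cx_{2}\right)=u_{n}\left(x_{1}\right)+cu_{n}\left(x_{2}\right)$, which by (\ref{G.5}) reads $\sum_{\beta>0}n^{\beta}R_{\beta}\left(x_{1}+cx_{2}\right)=\sum_{\beta>0}n^{\beta}\bigl(R_{\beta}\left(x_{1}\right)+cR_{\beta}\left(x_{2}\right)\bigr)$ for every $n$; since the functions $n\mapsto n^{\beta}$ with distinct $\beta>0$ are linearly independent, comparing coefficients gives $R_{\alpha}\left(x_{1}+cx_{2}\right)=R_{\alpha}\left(x_{1}\right)+cR_{\alpha}\left(x_{2}\right)$ for each $\alpha>0$.

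I do not expect a genuine obstacle. The only points that call for a line of care are the bookkeeping of growth rates after dividing by $n^{\alpha}$ — equivalently, in the alternative argument, the linear independence of the monomials $n^{\beta}$ used to match coefficients — together with the trivial observation that a convergent sequence of scalars is bounded.
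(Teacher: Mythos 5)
Your proof is correct and follows exactly the two routes the paper itself indicates (the paper disposes of this lemma in one sentence, citing either the identity $R_{\alpha}\left(x\right)=\mathrm{F.p.}\lim_{n\rightarrow\infty}n^{-\alpha}\left\langle y_{n},x\right\rangle$ together with Lemma \ref{Lemma 2}, or equation (\ref{G.5}) with Lemma \ref{Lemma 1}). You have merely supplied the routine verifications that the paper leaves implicit, so there is nothing to correct.
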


In general $Y$ nor all $R_{\alpha}$\ will not be continuous, as the Example
\ref{Example 4} shows.\smallskip

\section{Finite parts in a Fr\'{e}chet
space\label{Section: Finite parts in a Frechet space}}

We shall now consider the continuity and structure of finite parts in a
Fr\'{e}chet space. We start with some useful preliminary results.\smallskip

\begin{lemma}
\label{Lemma Fr.1}Let $X$ be a Fr\'{e}chet space and let $\left\{
y_{n}\right\}  _{n=0}^{\infty}$ be a sequence of non zero elements of
$X^{\prime}.$ Then there exists $x\in X$ such that $\left\langle
y_{n},x\right\rangle \neq0$ $\forall n\in\mathbb{N}_{0}.$
\end{lemma}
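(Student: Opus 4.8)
The plan is to use a Baire category argument together with the fact that each individual $y_n$, being a nonzero continuous functional, has a dense kernel complement. The naive approach—pick $x_0$ with $\langle y_0,x_0\rangle\neq 0$, then perturb to handle $y_1$, and so on—fails because a sequential perturbation might not converge, and even if it does, the limit might slip into some kernel. So instead I would set up the right complete metric space on which to apply Baire's theorem.

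First I would fix, for each $n$, an element $a_n\in X$ with $\langle y_n,a_n\rangle\neq 0$; such elements exist since $y_n\neq 0$. The set $U_n=\{x\in X:\langle y_n,x\rangle\neq 0\}$ is open (preimage of $K\setminus\{0\}$ under a continuous map) and dense: indeed, given any $x\in X$ and any neighborhood, either $x\in U_n$ already, or $\langle y_n,x\rangle=0$ and then $x+\varepsilon a_n\in U_n$ for all small $\varepsilon\neq 0$, which can be made arbitrarily close to $x$. Thus each $U_n$ is open and dense in $X$. Since $X$ is a Fréchet space, it is a complete metric space, hence a Baire space, so $\bigcap_{n=0}^\infty U_n$ is dense—in particular nonempty. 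Any $x$ in this intersection satisfies $\langle y_n,x\rangle\neq 0$ for all $n\in\mathbb{N}_0$, which is exactly the claim.

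The key steps, in order, are: (1) produce witnesses $a_n$ from $y_n\neq 0$; (2) observe $U_n$ is open by continuity of $y_n$; (3) prove $U_n$ is dense using the witness $a_n$ to push points off the hyperplane $\ker y_n$; (4) invoke completeness of $X$ to get that $X$ is a Baire space; (5) conclude $\bigcap_n U_n\neq\emptyset$. I expect step (3) to be the only place requiring a genuine (though short) argument, and even that is routine once one notes that a proper subspace of a topological vector space has empty interior. The completeness hypothesis (Fréchet, hence Baire) is what makes the countable intersection nonempty; this is where the full strength of the hypothesis enters, and it is unavoidable, since in a general (non-Baire) topological vector space a countable family of dense open sets can have empty intersection.

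A minor technical point worth flagging: the statement concerns a sequence $\{y_n\}_{n=0}^\infty$ indexed from $0$, but this is immaterial—the countable union/intersection over $\mathbb{N}_0$ is handled identically. One should also note that the argument in fact yields a \emph{dense} (indeed comeager) set of such $x$, not merely one, which may be convenient later when one wants to choose $x$ with additional properties.
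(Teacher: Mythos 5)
Your proof is correct and is essentially the paper's own argument viewed through complements: the paper notes that each kernel $F_n=\ker y_n$ is a closed proper subspace, hence of first category, so $\bigcup_n F_n\neq X$, which is exactly your statement that the open dense sets $U_n=X\setminus F_n$ have nonempty (indeed dense) intersection by Baire. No substantive difference.
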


\begin{proof}
Indeed, since $y_{n}\neq0$ the kernel of $y_{n},$ $F_{n}=\left\{  x\in
X:\left\langle y_{n},x\right\rangle =0\right\}  $ is a closed proper subspace
of $X$ and thus of first category. Hence $\cup_{n=0}^{\infty}F_{n}\neq
X.$\smallskip
\end{proof}

Observe that this result fails in spaces that are not Fr\'{e}chet. Consider,
for example, the sequence $\left\{  \delta\left(  t-n\right)  \right\}
_{n=0}^{\infty}$ in the space $\mathcal{D}^{\prime}\left(  \mathbb{R}\right)
.$

Recall that a function $f:W\rightarrow V,$ where $W$ and $V$ are topological
spaces, is called a Baire function of the first class if there exists a
sequence of continuous functions from $W$ to $V,$ $\left\{  f_{n}\right\}
_{n=0}^{\infty},$ such that $f\left(  w\right)  =\lim_{n\rightarrow\infty
}f_{n}\left(  w\right)  $ for \emph{all} elements $w\in W.$\smallskip

\begin{lemma}
\label{Lemma Fr.2}Let $X$ be a Fr\'{e}chet space and let $A:X\rightarrow
\mathbb{R}$ be a function that satisfies the following three properties:

1. $A$ is a Baire function of the first class;

2. $A\left(  x-y\right)  \leq\max\left\{  A\left(  x\right)  ,A\left(
y\right)  \right\}  ;$

3. $A\left(  cx\right)  =A\left(  x\right)  $ if $c\neq0.$

Then $A$ is bounded above in $X$ and it actually attains its maximum.
\end{lemma}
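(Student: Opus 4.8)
The plan is to prove Lemma \ref{Lemma Fr.2} by a category argument in the spirit of the standard uniform boundedness proof, combined with properties 2 and 3 to upgrade a "large" level set to the whole space. First I would introduce the sets
\[
E_{m}=\left\{ x\in X:A\left( x\right) \leq m\right\} \,,\qquad m\in\mathbb{N}\,.
\]
Since $A$ is a Baire function of the first class, it is the pointwise limit of continuous functions $A_{j}$, and each set $E_{m}=\bigcap_{j}\bigcup_{i\geq j}\left\{ x:A_{i}\left( x\right) \leq m+1/j\right\}$ is a countable intersection of $F_{\sigma}$ sets, hence Borel; in any case the relevant point is that for a first-class Baire function the set $\left\{ x:A\left( x\right) <c\right\}$ differs from an open set by a first-category set (Baire-function level sets have the Baire property). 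Since $X=\bigcup_{m}E_{m}$ and $X$ is a Baire space (being a Fr\'{e}chet space, hence completely metrizable), some $E_{m_{0}}$ is non-meager, and therefore is non-meager in some nonempty open set; using the Baire property this forces the closure $\overline{E_{m_{0}}}$ to contain a nonempty open set $U$.

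The next step is to exploit properties 2 and 3 to blow $U$ up to all of $X$. Fix $x_{0}\in U$ and let $W=U-x_{0}$, a neighborhood of $0$. Given any $x\in X$, since $W$ is absorbing there is $c\neq0$ with $cx\in W$, so $x_{0}+cx\in U\subset\overline{E_{m_{0}}}$. Because $x_{0}\in U\subset\overline{E_{m_{0}}}$ as well, property 2 applied to $\left( x_{0}+cx\right) -x_{0}$ gives, at least on a dense set and then by a limiting argument using that $A$ takes the value $\leq m_{0}$ densely near these points, $A\left( cx\right) \leq\max\left\{ A\left( x_{0}+cx\right) ,A\left( x_{0}\right) \right\} \leq m_{0}$, and then property 3 yields $A\left( x\right) =A\left( cx\right) \leq m_{0}$. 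Hence $A\leq m_{0}$ on all of $X$, so $A$ is bounded above. I should be careful here that property 2 is only assumed for actual values of $A$, so I will want $\overline{E_{m_{0}}}$ itself to be controlled — the cleanest route is to first show, using the Baire property of the level set, that $E_{m_{0}}$ (not merely its closure) contains a set $V\cap(X\setminus N)$ for $V$ open and $N$ meager, and then use that $X$ is not the union of $N$, $x+N$ and translates to find genuine points of $E_{m_{0}}$ in the required positions; alternatively one argues directly that $\left\{ A\leq m_{0}\right\}$ has nonempty interior after noting $A$ upper semicontinuous-like behavior fails and must be replaced by the Baire-property argument.

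For the attainment of the maximum, let $M=\sup_{x\in X}A\left( x\right) <\infty$ from the first part, and suppose for contradiction that $A\left( x\right) <M$ for every $x$. Then $X=\bigcup_{m}\left\{ x:A\left( x\right) \leq M-1/m\right\}$, and repeating the argument above produces some $m_{1}$ with $A\leq M-1/m_{1}$ on all of $X$, contradicting the definition of $M$. Hence the supremum is attained.

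The main obstacle I expect is the rigorous passage from "$E_{m_{0}}$ is non-meager" to "$E_{m_{0}}$ meets $U$, $x_{0}+cx$ and the needed translates in genuine points," i.e. handling the interplay between the first-class Baire property of $A$ (which only gives control up to meager sets) and the inequality in property 2 (which only constrains genuine values of $A$). The resolution is the Baire-category bookkeeping: $X$ cannot be covered by countably many meager sets, in particular not by $N\cup\left( x_{0}+N\right)$ together with the meager complement of a level set, so the three "good" conditions can be met simultaneously at a common point after a suitable choice of the scalar $c$; everything else — linearity-free manipulation via properties 2 and 3, and the contradiction argument for attainment — is routine.
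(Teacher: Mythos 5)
Your overall strategy --- Baire category to find a ``large'' level set, then properties 2 and 3 to propagate the bound to all of $X$, then a second pass for attainment --- is the right one, but your implementation runs into a technical difficulty that the paper's proof sidesteps entirely, and this difference is worth understanding. You apply Baire category to the level sets $E_{m}=\{x:A(x)\leq m\}$, which for a Baire-class-one $A$ are merely Borel, not closed; hence ``non-meager'' only gives you comeagerness in some open set, and you are forced into the Baire-property bookkeeping you describe (finding genuine points of $E_{m_{0}}$ at $x_{0}$ and at $x_{0}+cx$ simultaneously, modulo countably many meager sets). That bookkeeping can in fact be completed --- for small $c$ the set $\bigl(U\cap(U-cx)\bigr)\setminus\bigl(N\cup(N-cx)\bigr)$ is nonempty by Baire --- so your proof is salvageable, but you leave the key step as a sketch. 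The paper instead applies Baire category to the sets $V_{k}=\{x:\alpha_{n}(x)\leq k\ \forall n\}$ built from the approximating continuous functions $\alpha_{n}$: these are \emph{closed} (intersections of preimages of closed sets), they cover $X$ because a convergent sequence is bounded, some $V_{k}$ therefore has nonempty interior, and every point of $V_{k}$ genuinely satisfies $A\leq k$ since $A=\lim\alpha_{n}$. This single change makes the meager exceptional sets disappear. The paper's blow-up step is also cleaner in form: setting $M_{F}=\sup_{F}A$, property 3 gives $M_{X}=M_{U}$ for any neighborhood $U$ of $0$ (absorption), and property 2 gives $M_{X}=M_{V-V}\leq M_{V}$ for any $V$ with nonempty interior --- the same idea as your pointwise argument but with no exceptional points to avoid. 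Finally, for attainment the paper uses the auxiliary function $B=1/(M_{X}-A)$, which satisfies the same three hypotheses and must therefore be bounded, forcing $A\leq M_{X}-1/\lambda$, a contradiction; your alternative of re-running the category argument on $\{A\leq M_{X}-1/m\}$ also works and avoids having to check that $B$ is again of Baire class one, but it inherits the same closed-versus-Borel issue as your first part. If you adopt the closed sets $V_{k}$ your argument closes up completely.
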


\begin{proof}
We shall first show that $A$ is bounded above. If $F$ is a subset of $X,$
denote by $M_{F}=\sup\left\{  A\left(  x\right)  :x\in F\right\}  .$ If $U$ is
a neighborhood of $0,$ then \textsl{3} yields that $M_{X}=M_{U}.$ Let now $V$
be any set with non empty interior; then $V-V$ is a neighborhood of $0$ and
thus \textsl{2} yields that $M_{X}=M_{V-V}\leq M_{V}\leq M_{X}$ so that
$M_{X}=M_{V}.$

Let $\alpha_{n}$ be a sequence of continuous functions from $X$ to
$\mathbb{R}$ that converges to $A$ everywhere. Then $X=\bigcup_{k=0}^{\infty
}\left\{  x\in X:\alpha_{n}\left(  x\right)  \leq k\text{ },\forall n\right\}
$ so that there exists $k\in\mathbb{N}$ such that the set $V_{k}=\left\{  x\in
X:\alpha_{n}\left(  x\right)  \leq k\text{ }\forall n\right\}  $ has non empty
interior. This yields that $M_{X}=M_{V_{k}}\leq k,$ so that $A$ is bounded
above by $k$ in the \emph{whole} space $X.$

We should now show that there exists $\widetilde{x}\in X$ such that $A\left(
\widetilde{x}\right)  =M_{X}.$ If not, the function $B\left(  x\right)
=1/\left(  M_{X}-A\left(  x\right)  \right)  $ satisfies the same three
conditions as $A,$ and from what we have already proved, $B$ must be bounded
above by some constant $\lambda>0;$ but this means that $A\left(  x\right)
\leq M_{X}-1/\lambda,$ for all $x\in X,$ and consequently $M_{X}\leq
M_{X}-1/\lambda,$ a contradiction.\smallskip
\end{proof}

We now apply the Lemma \ref{Lemma Fr.2} to the study of finite parts. Indeed,
let $\left\{  y_{n}\right\}  _{n=1}^{\infty}$ be a sequence of elements of the
dual space $X^{\prime}$ of the Fr\'{e}chet space $X,$ and suppose that for
each $x\in X$ the finite part of the limit $Y\left(  x\right)  =\mathrm{F.p.}%
\lim_{n\rightarrow\infty}\left\langle y_{n},x\right\rangle $ exists. If the
infinite part of $\left\langle y_{n},x\right\rangle $ has the expression as a
finite sum,
\begin{equation}
u_{n}\left(  x\right)  =n^{\alpha_{1}}R_{\alpha_{1}}\left(  x\right)
+\cdots+n^{\alpha_{k}}R_{\alpha_{k}}\left(  x\right)  =\sum_{\alpha
>0}n^{\alpha}R_{\alpha}\left(  x\right)  \,, \label{Fr.0}%
\end{equation}
where $\alpha_{1}>\cdots>\alpha_{k}>0,$ and $R_{\alpha_{1}}\left(  x\right)
,\ldots,R_{\alpha_{k}}\left(  x\right)  \in K\setminus\left\{  0\right\}  ,$
define $A\left(  x\right)  =0$ if $u_{n}\left(  x\right)  =0$ and as%
\begin{equation}
A\left(  x\right)  =\alpha_{1}=\max\left\{  \alpha>0:R_{\alpha}\left(
x\right)  \neq0\right\}  \,, \label{Fr.1}%
\end{equation}
otherwise.\smallskip

\begin{lemma}
\label{Lemma Fr.3}The function $A$ is bounded above and attains its maximum in
$X.$
\end{lemma}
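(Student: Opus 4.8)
The plan is to verify that the function $A$ of (\ref{Fr.1}) satisfies the three hypotheses of Lemma \ref{Lemma Fr.2} and then apply that lemma verbatim. Properties \textsl{2} and \textsl{3} fall out of the linearity of the $R_{\alpha}$ established in Lemma \ref{Lemma 3}. For \textsl{3}: if $c\neq 0$ then $R_{\alpha}(cx)=cR_{\alpha}(x)$, so $R_{\alpha}(cx)\neq 0$ precisely when $R_{\alpha}(x)\neq 0$; hence the index sets $\left\{\alpha>0:R_{\alpha}(cx)\neq 0\right\}$ and $\left\{\alpha>0:R_{\alpha}(x)\neq 0\right\}$ coincide, and $A(cx)=A(x)$ (note also that $u_{n}(cx)=0$ if and only if $u_{n}(x)=0$). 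For \textsl{2}: put $\beta=\max\left\{A(x),A(y)\right\}\geq 0$; for every $\alpha>\beta$ both $R_{\alpha}(x)$ and $R_{\alpha}(y)$ vanish, so $R_{\alpha}(x-y)=R_{\alpha}(x)-R_{\alpha}(y)=0$, which forces $A(x-y)\leq\beta$ (the case $u_{n}(x-y)=0$ being covered since $\beta\geq 0$).

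The substantive point is \textsl{1}, that $A$ is a Baire function of the first class, and for this I would exhibit an explicit approximating sequence. Put, for $n\geq 1$,
\[
A_{n}(x)=\frac{\log\!\bigl(1+\left|\left\langle y_{n},x\right\rangle \right|\bigr)}{\log(n+1)}\,.
\]
Each $A_{n}$ is continuous on $X$, being built from the continuous functional $y_{n}$, the modulus, the continuous map $t\mapsto\log(1+t)$ on $[0,\infty)$, and division by the positive constant $\log(n+1)$; so it suffices to show $A_{n}(x)\to A(x)$ for every fixed $x\in X$. Here I would split into the two cases from the definition of $A$. If $u_{n}(x)=0$, then $\left\langle y_{n},x\right\rangle =z_{n}(x)$ converges to $Y(x)$ and is in particular bounded, so the numerator stays bounded while $\log(n+1)\to\infty$, whence $A_{n}(x)\to 0=A(x)$. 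If $u_{n}(x)\neq 0$, let $\alpha_{1}=A(x)$; the remaining terms $n^{\alpha_{j}}R_{\alpha_{j}}(x)$ with $\alpha_{j}<\alpha_{1}$, together with the bounded sequence $z_{n}(x)$, are all $o\!\left(n^{\alpha_{1}}\right)$, so $\left\langle y_{n},x\right\rangle =n^{\alpha_{1}}\bigl(R_{\alpha_{1}}(x)+o(1)\bigr)$ with $R_{\alpha_{1}}(x)\neq 0$; hence $\log\!\bigl(1+\left|\left\langle y_{n},x\right\rangle \right|\bigr)=\alpha_{1}\log n+O(1)$ and $A_{n}(x)\to\alpha_{1}=A(x)$. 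This gives \textsl{1}.

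With \textsl{1}, \textsl{2}, \textsl{3} in hand, Lemma \ref{Lemma Fr.2} applies directly and yields that $A$ is bounded above on $X$ and attains its maximum. The only place demanding a moment's care is the pointwise-convergence step: one must check that the finite part $z_{n}(x)$ never interferes — which it does not, since a convergent sequence is bounded, even when $z_{n}(x)\to 0$ or $z_{n}(x)\equiv 0$ — and that the lower powers $n^{\alpha_{j}}$ with $\alpha_{j}<\alpha_{1}$ are genuinely negligible against $n^{\alpha_{1}}$. Both are routine once set up correctly, so I do not foresee a real obstacle; the mild subtlety is simply the bookkeeping of cases in the Baire-class-one verification.
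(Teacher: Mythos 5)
Your proposal is correct and follows exactly the paper's route: reduce to Lemma \ref{Lemma Fr.2} by checking its three hypotheses, with conditions \textsl{2} and \textsl{3} immediate from the linearity of the $R_{\alpha}$ and condition \textsl{1} coming from the limit formula $A(x)=\lim_{n\to\infty}\ln\bigl(1+\left|\left\langle y_{n},x\right\rangle\right|\bigr)/\ln n$ (your $\log(n+1)$ normalization is an inessential variant). You simply spell out the case analysis that the paper leaves as ``obvious.''
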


\begin{proof}
It is enough to prove that $A$ satisfies the three conditions of the Lemma
\ref{Lemma Fr.2}. However, condition \textsl{1} follows from the limit formula%
\begin{equation}
A\left(  x\right)  =\lim_{n\rightarrow\infty}\frac{\ln\left[  \left\vert
\left\langle y_{n},x\right\rangle \right\vert +1\right]  }{\ln n}\,,
\label{Fr.2}%
\end{equation}
while \textsl{2} and \textsl{3} are obvious.\smallskip
\end{proof}

The Lemma \ref{Lemma Fr.3} not only means that if $\widetilde{\alpha}%
=\max\left\{  A\left(  x\right)  :x\in X\right\}  $ then $R_{\alpha}\left(
x\right)  =0$ if $\alpha>\widetilde{\alpha},$ but it also means that
$R_{\widetilde{\alpha}}\neq0.$ The linear form $R_{\widetilde{\alpha}}$ is
actually \emph{continuous} as follows from the Banach-Steinhaus theorem since%
\begin{equation}
R_{\widetilde{\alpha}}\left(  x\right)  =\lim_{n\rightarrow\infty
}n^{-\widetilde{\alpha}_{1}}\left\langle y_{n},x\right\rangle \,, \label{Fr.3}%
\end{equation}
for each $x\in X.$

We can then replace $y_{n}$ by $y_{n}-n^{\widetilde{\alpha}}R_{\widetilde
{\alpha}}$ and apply the same ideas as above. Therefore, for \emph{some}
integers $k$ we obtain exponents $\widetilde{\alpha}_{1}=\widetilde{\alpha
}>\cdots>\widetilde{\alpha}_{k}>0$ such that $R_{\alpha}\left(  x\right)  =0$
if $\alpha>\widetilde{\alpha}_{k},$ $\alpha\neq\widetilde{\alpha}_{j},$ $1\leq
j<k,$ while $R_{\widetilde{\alpha}_{j}}$ is continuous and $R_{\widetilde
{\alpha}_{j}}\neq0$ for $1\leq j\leq k.$ In principle one could think that
this is possible for each $k\geq0,$ but if it were then we would obtain an
infinite sequence of non zero continuous functionals $\left\{  R_{\widetilde
{\alpha}_{j}}\right\}  _{j=0}^{\infty}$ and the Lemma \ref{Lemma Fr.1} would
give us the existence of $x^{\ast}\in X$ such that $R_{\widetilde{\alpha}_{j}%
}\left(  x^{\ast}\right)  \neq0$ for all $j,$ a contradiction, since for any
$x\in X$ the set $\left\{  \alpha>0:R_{\alpha}\left(  x\right)  \neq0\right\}
$ is finite. Summarizing, we have the following result.\smallskip

\begin{theorem}
\label{Theorem Fr.1}Let $\left\{  y_{n}\right\}  _{n=1}^{\infty}$ be a
sequence of elements of the dual space $X^{\prime}$ of the Fr\'{e}chet space
$X,$ and suppose that for each $x\in X$ the finite part of the limit $Y\left(
x\right)  =\mathrm{F.p.}\lim_{n\rightarrow\infty}\left\langle y_{n}%
,x\right\rangle $ exists. Then there exists $k\in\mathbb{N},$ exponents
$\widetilde{\alpha}_{1}>\cdots>\widetilde{\alpha}_{k}>0,$ and continuous non
zero linear functionals $\left\{  R_{\widetilde{\alpha}_{j}}\right\}
_{j=1}^{k}$ such that for all $x\in X$%
\begin{equation}
\left\langle y_{n},x\right\rangle =n^{\widetilde{\alpha}_{1}}R_{\widetilde
{\alpha}_{1}}\left(  x\right)  +\cdots+n^{\widetilde{\alpha}_{k}}%
R_{\widetilde{\alpha}_{k}}\left(  x\right)  +z_{n}\left(  x\right)
\,,\label{Fr.4}%
\end{equation}
where the finite part $z_{n}$ is continuous for all $n$ and where
\begin{equation}
Y\left(  x\right)  =\lim_{n\rightarrow\infty}z_{n}\left(  x\right)
\,,\label{Fr.5}%
\end{equation}
is also a continuous linear functional on $X.$
\end{theorem}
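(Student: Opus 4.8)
The plan is to establish (\ref{Fr.4}) by peeling off the infinite part one power of $n$ at a time, applying the Banach--Steinhaus theorem at each stage. First I would invoke Lemma \ref{Lemma Fr.3}: the function $A$ of (\ref{Fr.1}) attains its maximum, say at $\widetilde{\alpha}_{1}=\max\{A(x):x\in X\}$. If $\widetilde{\alpha}_{1}=0$ there is no infinite part at any $x$, so $\left\langle y_{n},x\right\rangle =z_{n}(x)$ is already continuous and $Y=\lim_{n}z_{n}$ is continuous by Banach--Steinhaus (a Fr\'{e}chet space is barreled), and the conclusion holds with no infinite part. Otherwise $R_{\alpha}\equiv 0$ for every $\alpha>\widetilde{\alpha}_{1}$ while $R_{\widetilde{\alpha}_{1}}\not\equiv 0$, and since $R_{\widetilde{\alpha}_{1}}(x)=\lim_{n\to\infty}n^{-\widetilde{\alpha}_{1}}\left\langle y_{n},x\right\rangle$ for every $x$ by (\ref{Fr.3}), the Banach--Steinhaus theorem gives $R_{\widetilde{\alpha}_{1}}\in X^{\prime}$.

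Next I would set up the induction. Replace $y_{n}$ by $y_{n}^{(1)}=y_{n}-n^{\widetilde{\alpha}_{1}}R_{\widetilde{\alpha}_{1}}$, which again lies in $X^{\prime}$ because $R_{\widetilde{\alpha}_{1}}$ is continuous. The point to verify is that $\{y_{n}^{(1)}\}$ still satisfies the hypothesis of the theorem: for each $x$, deleting the leading term from the decomposition (\ref{G.1})--(\ref{G.3}) of $\left\langle y_{n},x\right\rangle$ produces a decomposition of $\left\langle y_{n}^{(1)},x\right\rangle$ with the \emph{same} finite part $z_{n}(x)$, hence the same $Y(x)$, and with associated leading-exponent function $A^{(1)}$ satisfying $A^{(1)}(x)<\widetilde{\alpha}_{1}$ for all $x$ (here one uses that $A(x)=\widetilde{\alpha}_{1}$ forces $R_{\widetilde{\alpha}_{1}}(x)\neq 0$). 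Applying Lemma \ref{Lemma Fr.3} to $\{y_{n}^{(1)}\}$, the function $A^{(1)}$ attains its maximum, so $\widetilde{\alpha}_{2}:=\max A^{(1)}<\widetilde{\alpha}_{1}$; if $\widetilde{\alpha}_{2}>0$ then the same Banach--Steinhaus argument shows $R_{\widetilde{\alpha}_{2}}\in X^{\prime}$ and $R_{\widetilde{\alpha}_{2}}\not\equiv 0$, and we iterate.

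Finally I would argue that this process terminates. If it did not, we would obtain an infinite sequence $\{R_{\widetilde{\alpha}_{j}}\}_{j=1}^{\infty}$ of nonzero elements of $X^{\prime}$, and Lemma \ref{Lemma Fr.1} would give $x^{\ast}\in X$ with $R_{\widetilde{\alpha}_{j}}(x^{\ast})\neq 0$ for every $j$. But each successive subtraction only removes the top term and never alters the lower-order coefficients, so every $\widetilde{\alpha}_{j}$ is one of the exponents actually occurring in the infinite part of $\left\langle y_{n},x^{\ast}\right\rangle$; since that infinite part is by hypothesis a \emph{finite} sum of the form (\ref{G.2}), only finitely many such exponents exist, a contradiction. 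Hence for some $k$ the sequence $y_{n}-\sum_{j=1}^{k}n^{\widetilde{\alpha}_{j}}R_{\widetilde{\alpha}_{j}}$ has no infinite part at any $x$, i.e.\ it equals $z_{n}$; being a finite linear combination of elements of $X^{\prime}$, each $z_{n}$ is continuous, which is (\ref{Fr.4}) with $z_{n}$ continuous, and then (\ref{Fr.5}) together with a last application of Banach--Steinhaus yields $Y\in X^{\prime}$.

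I expect the main obstacle to be organizational rather than deep: the essential analytic input --- that $A$ is bounded above and \emph{attains} its supremum --- is already contained in Lemmas \ref{Lemma Fr.2} and \ref{Lemma Fr.3}, so the remaining work is the bookkeeping of the induction (checking that the subtraction step preserves the finite-part-limit hypothesis with unchanged $z_{n}$ and strictly decreases $\max A$) and the termination argument, in which Lemma \ref{Lemma Fr.1} --- a property that fails outside Fr\'{e}chet spaces --- is precisely what rules out an infinite tower of exponents.
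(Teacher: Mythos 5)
Your proposal is correct and follows essentially the same route as the paper: apply Lemma \ref{Lemma Fr.3} to extract the top exponent, use Banach--Steinhaus on $n^{-\widetilde{\alpha}_{1}}\left\langle y_{n},x\right\rangle$ to get continuity of $R_{\widetilde{\alpha}_{1}}$, subtract and iterate, and terminate via Lemma \ref{Lemma Fr.1} by contradiction with the finiteness of each pointwise infinite part. Your write-up is in fact slightly more explicit than the paper's about the bookkeeping of the induction (the degenerate case $\widetilde{\alpha}_{1}=0$ and the verification that the subtraction preserves the hypothesis with the same $z_{n}$), but the substance is identical.
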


\begin{proof}
The only thing left to prove is the continuity of the $z_{n}$'s and the
continuity of $Y.$ But the continuity of the $R_{\widetilde{\alpha}_{j}}$'s
yields the continuity of the $z_{n}$'s because of (\ref{Fr.4}) while the
continuity of $Y$ follows from (\ref{Fr.5}) and the Banach-Steinhaus theorem.
\end{proof}

\section{More general finite parts\label{Section: More General Finite Parts}}

One can consider a general finite part limit process as follows. Let
$\Lambda\cup\left\{  \lambda_{0}\right\}  $ be a topological space where
$\lambda_{0}\in\overline{\Lambda}\setminus\Lambda,$ and let $\left(
\mathfrak{E,}\prec\right)  $ be a totally ordered set. Let $\mathsf{B}%
=\left\{  \rho_{\alpha}\right\}  _{\alpha\in\mathfrak{E}}$ be the
\textquotedblleft basic infinite functions,\textquotedblright\ that is, a
family of functions with the following properties:\smallskip

\begin{enumerate}
\item For each $\alpha\in\mathfrak{E,}$ $\rho_{\alpha}:\Lambda\rightarrow
\left(  0,\infty\right)  ,$ and $\lim_{\lambda\rightarrow\lambda_{0}}%
\rho_{\alpha}\left(  \lambda\right)  =\infty;$\smallskip

\item If $\alpha\prec\beta$ then $\rho_{\alpha}\left(  \lambda\right)
=o\left(  \rho_{\beta}\left(  \lambda\right)  \right)  $ as $\lambda
\rightarrow\lambda_{0}.$\smallskip
\end{enumerate}

Let $y_{\lambda}\in K,$ where $K$ is $\mathbb{R}$ or $\mathbb{C},$\ for each
$\lambda\in\Lambda.$ If we can write%
\begin{equation}
y_{\lambda}=u_{\lambda}+z_{\lambda}\,,\label{MGG.1}%
\end{equation}
where the \textquotedblleft infinite part\textquotedblright\ has the form%
\begin{equation}
u_{\lambda}=\sum_{j=1}^{k}R_{\alpha_{j}}\rho_{\alpha_{j}}\left(
\lambda\right)  \,,\label{MGG.2}%
\end{equation}
where $\alpha_{1}\succ\cdots\succ\alpha_{k},$ and $R_{\alpha_{1}}%
,\ldots,R_{\alpha_{k}}\in K\setminus\left\{  0\right\}  ,$ and where the
\textquotedblleft finite part,\textquotedblright\ $z_{\lambda},$ satisfies
that the limit%
\begin{equation}
Y=\lim_{\lambda\rightarrow\lambda_{0}}z_{\lambda}\label{MGG.3}%
\end{equation}
exists, then we say that the finite part of the limit of $y_{\lambda}$ as
$\lambda\rightarrow\lambda_{0}$ with respect to $\mathsf{B}$ exists and equals
$Y,$ and write\footnote{Let $V$ be the vector space of all functions of the
form $\sum_{j=1}^{k}c_{j}\rho_{\alpha_{j}}+\mu$ with $\lim_{\lambda
\rightarrow\lambda_{0}}\mu(\lambda)=0$. The triple $N=(\Lambda,K,V)$ forms a
\emph{neutrix} in the sense of van der Corput \cite{Corput}. In his
terminology, the finite part limit (\ref{MGG.4}) coincides with the neutrix
value $y_{N}$.}
\begin{equation}
Y=\mathrm{F.p.}_{\mathsf{B}}\lim_{\lambda\rightarrow\lambda_{0}}y_{\lambda
}\,.\label{MGG.4}%
\end{equation}

We have considered the \emph{standard} system $\mathsf{B}=\left\{
\rho_{\alpha}\right\}  _{\alpha>0}$ where $\Lambda=\mathbb{N},$ $\lambda
_{0}=\infty,$ and $\rho_{\alpha}\left(  \lambda\right)  =\lambda^{\alpha}.$
Naturally one can consider the same standard system for functions defined in
any unbounded set $\Lambda\subset(0,\infty),$ in particular for $\Lambda
=(0,\infty).$

We can also consider \emph{Hadamard finite part} limits\footnote{Hadamard was
probably the first to use finite parts; in his 1923 work \cite{Hadamard}, he
employs them to find fundamental solutions of partial differential
equations.}, where the infinite basic functions are products of powers and
powers of logarithms. Explicitly, let $\mathfrak{E}=[0,\infty)^{2}%
\setminus\left\{  \left(  0,0\right)  \right\}  ,$ with the order given by
$\left(  \alpha_{1},\beta_{1}\right)  \prec\left(  \alpha_{2},\beta
_{2}\right)  $ if $\alpha_{1}<\alpha_{2}$ or if $\alpha_{1}=\alpha_{2}$ and
$\beta_{1}<\beta_{2}.$ Here $\Lambda\subset(1,\infty),$ and the basic infinite
functions $\mathsf{B}=\left\{  \rho_{\left(  \alpha,\beta\right)  }\right\}
_{\left(  \alpha,\beta\right)  \in\mathfrak{E}}$ are given as $\rho_{\left(
\alpha,\beta\right)  }\left(  \lambda\right)  =\lambda^{\alpha}\ln^{\beta
}\lambda.$

We can also take a set $\epsilon\subset\left(  0,1\right)  $ and consider
limits as $\varepsilon\in\epsilon$ tends to $0.$ For standard finite part
limits $\mathfrak{E}=(0,\infty)$ and the basic infinite functions are
$\rho_{\alpha}\left(  \varepsilon\right)  =\varepsilon^{-\alpha};$ for
Hadamard finite part limits, $\mathfrak{E}=[0,\infty)^{2}\setminus\left\{
\left(  0,0\right)  \right\}  $ and $\rho_{\left(  \alpha,\beta\right)
}\left(  \varepsilon\right)  =\varepsilon^{-\alpha}\left\vert \ln
\varepsilon\right\vert ^{\beta}.$

The continuity of the finite part of the limit in Fr\'{e}chet spaces, Theorem
\ref{Theorem Fr.1}, will also hold for these more general systems of basic
infinite functions.\ This is of course the case for standard finite limits.
For Hadamard finite parts the proof can be modified as follows. Indeed, let
$X$ be a Fr\'{e}chet space, $\Lambda\subset(1,\infty)$ is an unbounded set and
$y_{\lambda}\in X^{\prime}$ for each $\lambda\in\Lambda.$ Suppose that for
each $x\in X$ the evaluation $\left\langle y_{\lambda},x\right\rangle $ can be
written as
\begin{equation}
\left\langle y_{\lambda},x\right\rangle =u_{\lambda}\left(  x\right)
+z_{\lambda}\left(  x\right)  \,,\label{MG.1}%
\end{equation}
where the infinite part has the ensuing form for some $k=k_{x},$%
\begin{equation}
u_{\lambda}\left(  x\right)  =\lambda^{\alpha_{1}}\ln^{\beta_{1}}%
\lambda\,R_{\left(  \alpha_{1},\beta_{1}\right)  }\left(  x\right)
+\cdots+\lambda^{\alpha_{k}}\ln^{\beta_{k}}\lambda\,R_{\left(  \alpha
_{k},\beta_{k}\right)  }\left(  x\right)  \,,\label{MG.2}%
\end{equation}
where the exponents $\left(  \alpha_{j},\beta_{j}\right)  \in\mathfrak{E}$
satisfy$\ \left(  \alpha_{1},\beta_{1}\right)  \succ\cdots\succ\left(
\alpha_{k},\beta_{k}\right)  ,$ where $R_{\left(  \alpha_{1},\beta_{1}\right)
}\left(  x\right)  ,\ldots,R_{\left(  \alpha_{k},\beta_{k}\right)  }\left(
x\right)  \in K\setminus\left\{  0\right\}  ,$ and where the finite part,
$z_{\lambda}\left(  x\right)  ,$ satisfies that the limit%
\begin{equation}
Y\left(  x\right)  =\lim_{\lambda\rightarrow\infty,\lambda\in\Lambda
}z_{\lambda}\left(  x\right)  \,,\label{MG.3}%
\end{equation}
exists. As before, we set $R_{\left(  \alpha,\beta\right)  }\left(  x\right)
=0$ if $\left(  \alpha,\beta\right)  \neq\left(  \alpha_{j},\beta_{j}\right)
$ for $1\leq j\leq k_{x}.$ Then we have the following generalization of the
Lemma \ref{Lemma Fr.3}.\smallskip

\begin{lemma}
\label{Lemma GM.1}Let%
\begin{equation}
E\left(  x\right)  =\left(  A\left(  x\right)  ,B\left(  x\right)  \right)
=\max\left\{  \left(  \alpha,\beta\right)  \in\mathfrak{E}:R_{\left(
\alpha,\beta\right)  }\left(  x\right)  \neq0\right\}  \,. \label{MG.5}%
\end{equation}
Then $E$ attains its maximum, $\left(  \alpha^{\ast},\beta^{\ast}\right)  ,$
and $R_{\left(  \alpha^{\ast},\beta^{\ast}\right)  }$ is continuous and not zero.
\end{lemma}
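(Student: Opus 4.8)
The plan is to modify the proof of Lemma \ref{Lemma Fr.3}, applying Lemma \ref{Lemma Fr.2} twice: first to the exponent $A(x)$ of the power of $\lambda$ in the infinite part, and then, restricted to the maximal power slice, to the exponent $B(x)$ of the power of $\ln\lambda$. Throughout, fix a sequence $\lambda_n\in\Lambda$ with $\lambda_n\to\infty$ and $\lambda_n>e$, and set $E(x)=(0,0)$ when $u_\lambda(x)\equiv 0$. The function $A$ is a Baire function of the first class, since
\begin{equation*}
A(x)=\lim_{\lambda\to\infty,\ \lambda\in\Lambda}\frac{\ln\big[\,\big|\langle y_\lambda,x\rangle\big|+1\,\big]}{\ln\lambda}=\lim_{n\to\infty}\frac{\ln\big[\,\big|\langle y_{\lambda_n},x\rangle\big|+1\,\big]}{\ln\lambda_n}\,,
\end{equation*}
which is a pointwise limit of continuous functions (the dominant term $\lambda^{\alpha_1}\ln^{\beta_1}\lambda\,R_{(\alpha_1,\beta_1)}(x)$ of (\ref{MG.2}) gives the first equality, and $A(x)=0$ when $u_\lambda(x)\equiv0$ because $z_\lambda(x)$ converges). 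Since each $R_{(\alpha,\beta)}$ is linear (the analogue of Lemma \ref{Lemma 3}), conditions \textsl{2} and \textsl{3} hold for $A$ exactly as in Lemma \ref{Lemma Fr.3}: if $\alpha>\max\{A(x),A(y)\}$ then $R_{(\alpha,\beta)}(x-y)=0$ for all $\beta$, and $A(cx)=A(x)$ for $c\neq0$. So Lemma \ref{Lemma Fr.2} gives that $A$ attains its maximum $\alpha^{\ast}$; hence $R_{(\alpha,\beta)}\equiv0$ whenever $\alpha>\alpha^{\ast}$, and $A(x)=\alpha^{\ast}$ for some $x$.

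The main step --- and the place I expect the real work to be --- is the second application of Lemma \ref{Lemma Fr.2}, now to $C\colon X\to\mathbb{R}$ defined by $C(x)=B(x)$ if $A(x)=\alpha^{\ast}$ and $C(x)=0$ otherwise. For condition \textsl{1}, the point is that $\lambda^{-\alpha^{\ast}}\langle y_\lambda,x\rangle$ is asymptotically $R_{(\alpha^{\ast},B(x))}(x)(\ln\lambda)^{B(x)}$ when $A(x)=\alpha^{\ast}$ and tends to $0$ when $A(x)<\alpha^{\ast}$, whence
\begin{equation*}
C(x)=\lim_{\lambda\to\infty,\ \lambda\in\Lambda}\frac{\ln\big[\,\lambda^{-\alpha^{\ast}}\big|\langle y_\lambda,x\rangle\big|+1\,\big]}{\ln\ln\lambda}=\lim_{n\to\infty}\frac{\ln\big[\,\lambda_n^{-\alpha^{\ast}}\big|\langle y_{\lambda_n},x\rangle\big|+1\,\big]}{\ln\ln\lambda_n}\,,
\end{equation*}
a pointwise limit of continuous functions --- note that here the natural denominator is $\ln\ln\lambda$, not $\ln\lambda$. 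Condition \textsl{3} is clear, and I would verify \textsl{2} by cases according to whether $A(x),A(y)$ equal $\alpha^{\ast}$: if both do and $\beta>\max\{C(x),C(y)\}$ then $R_{(\alpha^{\ast},\beta)}(x-y)=0$; if exactly one does, say $A(x)=\alpha^{\ast}>A(y)$, then $R_{(\alpha^{\ast},\cdot)}(x-y)=R_{(\alpha^{\ast},\cdot)}(x)$, so $C(x-y)=C(x)$; and if neither does then $R_{(\alpha^{\ast},\cdot)}(x-y)\equiv0$, so $C(x-y)=0$; in every case $C(x-y)\le\max\{C(x),C(y)\}$. Lemma \ref{Lemma Fr.2} then yields that $C$ attains its maximum $\beta^{\ast}\ge0$ at some point where $A=\alpha^{\ast}$, so $R_{(\alpha,\beta)}\equiv0$ for all $(\alpha,\beta)\succ(\alpha^{\ast},\beta^{\ast})$ while $R_{(\alpha^{\ast},\beta^{\ast})}\neq0$; that is, $(\alpha^{\ast},\beta^{\ast})=\max\{E(x):x\in X\}$, attained.

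Finally, continuity of $R_{(\alpha^{\ast},\beta^{\ast})}$ comes from the Banach--Steinhaus theorem, because
\begin{equation*}
R_{(\alpha^{\ast},\beta^{\ast})}(x)=\lim_{n\to\infty}\lambda_n^{-\alpha^{\ast}}(\ln\lambda_n)^{-\beta^{\ast}}\langle y_{\lambda_n},x\rangle
\end{equation*}
for every $x$: each term $\lambda^{\alpha_j}\ln^{\beta_j}\lambda\,R_{(\alpha_j,\beta_j)}(x)$ with $(\alpha_j,\beta_j)\prec(\alpha^{\ast},\beta^{\ast})$ is $o\big(\lambda^{\alpha^{\ast}}(\ln\lambda)^{\beta^{\ast}}\big)$ and $z_\lambda(x)$ is bounded, so $R_{(\alpha^{\ast},\beta^{\ast})}$ is a pointwise limit of continuous linear functionals. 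The subtlety to watch in the middle step is that $\beta=0$ is a legitimate exponent (a genuine constant term), so ``$C(x)=0$'' lumps together the cases $A(x)<\alpha^{\ast}$ and ``$R_{(\alpha^{\ast},0)}(x)$ is the only surviving coefficient of $x$''; the case analysis above shows this causes no difficulty, and one also uses that $(\alpha^{\ast},\beta^{\ast})\neq(0,0)$ to conclude $R_{(\alpha^{\ast},\beta^{\ast})}\neq0$ from the point where $A$ attains $\alpha^{\ast}$. (Alternatively, one can get the last two steps in one stroke: applying the standard-system form of Theorem \ref{Theorem Fr.1} to $\lambda^{-\alpha^{\ast}}y_\lambda$ in the variable $\mu=\ln\lambda\in\ln\Lambda\subset(0,\infty)$ turns the top power slice of the Hadamard expansion into an ordinary power expansion in $\mu$.)
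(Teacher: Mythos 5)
Your proposal is correct and follows essentially the same route as the paper: apply Lemma \ref{Lemma Fr.2} first to $A$ via the limit formula with denominator $\ln\lambda$, then to the top-slice function (your $C$ is the paper's $B^{\ast}$) via the formula with denominator $\ln\ln\lambda$, and finally obtain continuity of $R_{(\alpha^{\ast},\beta^{\ast})}$ from the Banach--Steinhaus theorem applied to $\lambda^{-\alpha^{\ast}}\ln^{-\beta^{\ast}}\lambda\,\langle y_{\lambda},x\rangle$. You supply more detail than the paper does (the case analysis for condition \textsl{2} of $C$ and the treatment of the degenerate cases), but the argument is the same.
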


\begin{proof}
The proof of the Lemma \ref{Lemma Fr.3} applies to $A,$ so there exists
$\alpha^{\ast}=\max_{x\in X}A\left(  x\right)  .$ For this exponent
$\alpha^{\ast}$ we consider the function given by $B^{\ast}\left(  x\right)
=0$ if $R_{\left(  \alpha^{\ast},\beta\right)  }\left(  x\right)  =0$ for all
$\beta$ and otherwise by%
\begin{equation}
B^{\ast}\left(  x\right)  =\max\left\{  \beta:R_{\left(  \alpha^{\ast}%
,\beta\right)  }\left(  x\right)  \neq0\right\}  \,. \label{MG.6}%
\end{equation}
The Lemma \ref{Lemma Fr.2} yields the existence of $\beta^{\ast}=\max_{x\in
X}B^{\ast}\left(  x\right)  $ because%
\begin{equation}
B^{\ast}\left(  x\right)  =\lim_{\lambda\rightarrow\infty}\frac{\ln\left[
\lambda^{-\alpha^{\ast}}\left\vert \left\langle y_{\lambda},x\right\rangle
\right\vert +1\right]  }{\ln\ln\lambda}\,. \label{MG.7}%
\end{equation}
Since%
\begin{equation}
R_{\left(  \alpha^{\ast},\beta^{\ast}\right)  }\left(  x\right)
=\lim_{\lambda\rightarrow\infty}(\lambda^{-\alpha^{\ast}}\ln^{-\beta^{\ast}%
}\lambda)\left\langle y_{\lambda},x\right\rangle \,, \label{MG.8}%
\end{equation}
the continuity of $R_{\left(  \alpha^{\ast},\beta^{\ast}\right)  }$
follows.\smallskip
\end{proof}

Therefore we obtain that \emph{the Theorem \ref{Theorem Fr.1} holds for
Hadamard finite parts.}

These ideas can be further generalized. Let us take $m$ positive functions
$F_{1},F_{2},F_{3},\dots,F_{m}$ defined on an unbounded set $\Lambda
\subset(1,\infty),$ each of them tending to $\infty$ as $\lambda
\rightarrow\infty$ and such that $F_{j+1}(\lambda)=o(F_{j}^{\alpha}(\lambda))$
as $\lambda\rightarrow\infty$ for all $\alpha>0.$ We now consider
$\mathfrak{E}=[0,\infty)^{m}\setminus\left\{  (0,0,\dots,0)\right\}  $ with
the lexicographical order $\prec.$ Set $\mathbf{F}=(F_{1},F_{2},\dots,F_{m})$
and if $\vec{\alpha}\in\mathfrak{E}$ write $\mathbf{F}^{\vec{\alpha}}%
=F_{1}^{\alpha_{1}}F_{2}^{\alpha_{2}}\cdots F_{m}^{\alpha_{m}}$, where
$\vec{\alpha}=(\alpha_{1},\alpha_{2},\dots,\alpha_{m})$. We now choose the
basic infinite functions as $\mathsf{B}=\left\{  \mathbf{F}^{\vec{\alpha}%
}\right\}  _{\vec{\alpha}\in\mathfrak{E}}$\ . If $y_{\lambda}\in X^{\prime}$
for each $\lambda\in\Lambda$, where $X$ is again a Fr\'{e}chet space, and for
each $x\in X$ the evaluation $\left\langle y_{\lambda},x\right\rangle $ can be
decomposed as in (\ref{MG.1}) where the infinite part is now taken of the form
(for some $k=k_{x},$)
\begin{equation}
u_{\lambda}\left(  x\right)  =\mathbf{F}^{\vec{\alpha}_{1}}(\lambda
)R_{\vec{\alpha}_{1}}\left(  x\right)  +\cdots+\mathbf{F}^{\vec{\alpha}_{k}%
}(\lambda)R_{\vec{\alpha}_{1}}\left(  x\right)  \,,\label{MG.12}%
\end{equation}
with $\vec{\alpha}_{1}\succ\cdots\succ\vec{\alpha}_{k},$ and $R_{\vec{\alpha
}_{1}}\left(  x\right)  ,\ldots,R_{\vec{\alpha}_{k}}\left(  x\right)  \in
K\setminus\left\{  0\right\}  ,$ and where $z_{\lambda}\left(  x\right)  $
satisfies (\ref{MG.3}), we can then define the finite part limit of
$y_{\lambda}(x)$ as $Y(x)$. Defining $R_{\vec{\alpha}}(x)=0$ if $\vec{\alpha}$
does not occur in (\ref{MG.12}), the proof of Lemma \ref{Lemma GM.1} can be
readily adapted to show that the function $\mathbf{A}(x)=\max\left\{
\vec{\alpha}\in\mathfrak{E}:R_{\vec{\alpha}}\left(  x\right)  \neq0\right\}  $
also attains its maximum, $\vec{\alpha}^{\ast},$ and that $R_{\vec{\alpha
}^{\ast}}\in X^{\prime}\setminus\{0\}$. This leads to a general version of
Theorem \ref{Theorem Fr.1} for finite part limits with respect to the system
of infinite functions $\mathsf{B}=\left\{  \mathbf{F}^{\vec{\alpha}}\right\}
_{\vec{\alpha}\in\mathfrak{E}}$\ . Naturally, the Hadamard finite part
corresponds to the choices $m=2$, $F_{1}(\lambda)=\lambda$, and $F_{2}%
(\lambda)=\log\lambda$.

\section{Other types of topological vector
spaces\label{Section: Other types of topological vector spaces}}

The continuity of finite part limits holds not only in Fr\'{e}chet spaces, but
in other types of spaces, those that carry a final locally convex topology
given by a family of Fr\'{e}chet spaces \cite{Horvath}. Indeed, let
$(X_{i},u_{i})_{I}$ be a family of Fr\'{e}chet spaces and linear mappings
$u_{i}:X_{i}\rightarrow X$ for each $i\in I$. If $X$ is provided with the
finest locally convex topology that makes all mappings $u_{i}$ continuous, the
continuity of the finite part limits and the $R_{\vec{\alpha}}$'s follows at
once from the fact that $y\in X^{\prime}$ if and only if $y\circ u_{i}\in
X_{i}^{\prime}$, $\forall i\in I$. In particular, the result holds for
\emph{any} inductive limit of an inductive system of Fr\'{e}chet spaces.
Important instances of such inductive limits are those that can be written as
countable inductive unions of Fr\'{e}chet spaces, such as the \textsl{LF}%
-spaces, the \textsl{DFS}-spaces, and the \textsl{DFS}$^{\ast}$-spaces
\cite{komatsu}.

In these more general spaces, however, the set of exponents for which
$R_{\vec{\alpha}}\left(  x\right)  \neq0$ does not have to be finite, not
bounded, in general (see Examples \ref{Example 2}, \ref{Example 3}, and
\ref{Example 3.5} below).\smallskip

\begin{theorem}
\label{Theorem other 1}Let $X$ be a locally convex space that is the inductive
limit of a system of Fr\'{e}chet spaces. Let $\Lambda$ be an unbounded subset
of $\left(  1,\infty\right)  $ and for each $\lambda\in\Lambda$ let
$y_{\lambda}\in X^{\prime}.$ Suppose that for each $x\in X$ the finite part of
the limit $Y\left(  x\right)  =\mathrm{F.p.}\lim_{\lambda\rightarrow\infty
}\left\langle y_{\lambda},x\right\rangle $ exists. Then $Y\in X^{\prime}.$

Likewise, $R_{\vec{\alpha}}$ is continuous for each $\vec{\alpha},$ but while
for each $x\in X$ the set $\left\{  \vec{\alpha}:R_{\vec{\alpha}}\left(
x\right)  \neq0\right\}  $ is finite, the set $\left\{  \vec{\alpha}%
:R_{\vec{\alpha}}\neq0\right\}  $ could be infinite and not bounded above.
\end{theorem}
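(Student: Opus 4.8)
The plan is to reduce the statement to the Fréchet case already established in Theorem~\ref{Theorem Fr.1} (and its generalization to the systems $\mathsf{B}=\{\mathbf{F}^{\vec\alpha}\}$), exploiting the universal property of the inductive limit topology. Write $X$ as carrying the finest locally convex topology making a family of linear maps $u_i:X_i\to X$ continuous, where each $X_i$ is a Fréchet space. The crucial observation, recalled in the paragraph preceding the theorem, is that a linear functional $\phi$ on $X$ is continuous if and only if $\phi\circ u_i\in X_i^{\prime}$ for every $i\in I$. Fix $i\in I$ and pull everything back: for $x\in X_i$ we have $\langle y_\lambda, u_i(x)\rangle = \langle y_\lambda\circ u_i, x\rangle$, and since $y_\lambda\in X^{\prime}$ we get $y_\lambda\circ u_i\in X_i^{\prime}$.

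First I would check that the finite part limit hypothesis is inherited by the pulled-back sequence. By assumption, for every $w\in X$ the decomposition $\langle y_\lambda, w\rangle = u_\lambda(w) + z_\lambda(w)$ holds with an infinite part of the form $\sum_j \mathbf{F}^{\vec\alpha_j}(\lambda) R_{\vec\alpha_j}(w)$ and $z_\lambda(w)\to Y(w)$. Applying this with $w=u_i(x)$ shows that $\mathrm{F.p.}\lim_{\lambda\to\infty}\langle y_\lambda\circ u_i, x\rangle$ exists for every $x\in X_i$, with coefficients $R_{\vec\alpha}(u_i(x))$ and limit $Y(u_i(x))$; here one uses the uniqueness of the decomposition (Lemma~\ref{Lemma 1}) to see that the coefficients on $X_i$ are exactly the restrictions of the $R_{\vec\alpha}$ via $u_i$. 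Now Theorem~\ref{Theorem Fr.1} (for the general system $\mathbf{F}$) applies on the Fréchet space $X_i$: it gives that $x\mapsto Y(u_i(x))$ and each $x\mapsto R_{\vec\alpha}(u_i(x))$ are continuous linear functionals on $X_i$, i.e.\ $Y\circ u_i\in X_i^{\prime}$ and $R_{\vec\alpha}\circ u_i\in X_i^{\prime}$. Since $i$ was arbitrary, the universal property yields $Y\in X^{\prime}$ and $R_{\vec\alpha}\in X^{\prime}$ for every $\vec\alpha$.

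It remains to address the finiteness statements. For a single $x\in X$, the set $\{\vec\alpha:R_{\vec\alpha}(x)\neq 0\}$ is finite directly from the definition of the finite part limit at that point (the infinite part $u_\lambda(x)$ is a finite sum), so this is immediate and requires no appeal to the inductive structure. The contrasting assertion, that $\{\vec\alpha: R_{\vec\alpha}\neq 0\}$ may be infinite and unbounded, is not something to prove here but is witnessed by the forthcoming Examples~\ref{Example 2}, \ref{Example 3}, and \ref{Example 3.5}; the theorem merely asserts the possibility, so a pointer suffices. The only genuine subtlety—the step I would be most careful about—is verifying that the coefficient functionals $R_{\vec\alpha}$ are \emph{globally well defined} on $X$ and that their restrictions to each $X_i$ coincide with the coefficients produced by Theorem~\ref{Theorem Fr.1} on $X_i$; this is exactly where Lemma~\ref{Lemma 1} and Lemma~\ref{Lemma 3} are needed, but it is routine once one notes that the decomposition at each point of $X$ is unique and linear. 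With that in hand the argument closes.
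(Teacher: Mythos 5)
Your proposal is correct and follows the same route as the paper: the paper's (very terse) argument, given in the paragraph preceding the theorem, is precisely the pullback along the maps $u_i$ combined with the characterization $y\in X^{\prime}\iff y\circ u_i\in X_i^{\prime}$ for all $i$, followed by an application of the Fr\'{e}chet-space result on each $X_i$. Your added care about the uniqueness of the decomposition (Lemma~\ref{Lemma 1}) guaranteeing that the pulled-back coefficients are the restrictions $R_{\vec{\alpha}}\circ u_i$ is a detail the paper leaves implicit, but it is the same proof.
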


\section{Examples\label{Sect: Examples}}

In order to better understand our results, it is useful to look at several
examples.\smallskip

\begin{example}
\label{Example 1}The best known example of finite parts are the distributions
constructed as the finite part of divergent integrals \cite{GreenBook,
Kanwal}. Suppose $G$ is a homogenous continuous function in $\mathbb{R}%
^{d}\setminus\left\{  \mathbf{0}\right\}  ,$ homogenous of degree $\lambda
\in\mathbb{R}.$ Then $G$ gives a well defined distribution of the space
$\mathcal{D}^{\prime}\left(  \mathbb{R}^{d}\setminus\left\{  \mathbf{0}%
\right\}  \right)  ,$ which without loss of generality we can still denote by
$G,$ as $\left\langle G,\phi\right\rangle =\int_{\mathbb{R}^{d}\setminus
\left\{  \mathbf{0}\right\}  }G\left(  \mathbf{x}\right)  \phi\left(
\mathbf{x}\right)  \,\mathrm{d}\mathbf{x},$ for $\phi\in\mathcal{D}\left(
\mathbb{R}^{d}\setminus\left\{  \mathbf{0}\right\}  \right)  .$ When
$\lambda<-d$ then the integral $\int_{\mathbb{R}^{d}}G\left(  \mathbf{x}%
\right)  \phi\left(  \mathbf{x}\right)  \,\mathrm{d}\mathbf{x},$ would be
divergent in general if $\phi\in\mathcal{D}\left(  \mathbb{R}^{d}\right)  $
and thus there is no canonical distribution corresponding to $G$ in
$\mathcal{D}^{\prime}\left(  \mathbb{R}^{d}\right)  .$ One can, however,
define the distribution $\mathrm{F.p.}\left(  G\right)  ,$ the \emph{radial}
finite part\footnote{If instead of removing \emph{balls} of small radius,
solids of other shapes are removed one obtains a different finite part
distribution \cite{Farassat, VH1, YE}, an important fact in the numerical
solution of integral equations \cite{Farassat}. The known formulas for the
distributional derivatives of inverse power fields \cite{Frahm} and the
corresponding\ finite parts \cite{EK85b, EK88}\ hold for \emph{radial} finite
parts.} of $G$ by setting for $\phi\in\mathcal{D}\left(  \mathbb{R}%
^{d}\right)  $%
\begin{equation}
\left\langle \mathrm{F.p.}\left(  G\right)  ,\phi\right\rangle =\mathrm{F.p.}%
\lim_{n\rightarrow\infty}\int_{\left\vert \mathbf{x}\right\vert \geq
1/n}G\left(  \mathbf{x}\right)  \phi\left(  \mathbf{x}\right)  \,\mathrm{d}%
\mathbf{x}\,, \label{Ex.1}%
\end{equation}
a standard finite part if $-\lambda\notin\mathbb{N},$ and a Hadamard finite
part if $\lambda$ is an integer. Similar ideas are needed to construct
\emph{thick} distributions from locally integrable functions \cite{YE2}%
.\smallskip
\end{example}

\begin{example}
\label{Example 2}Let $\mathbb{D}=\left\{  z\in\mathbb{C}:\left\vert
z\right\vert <1\right\}  $ be the unit disc in $\mathbb{C}.$ Let $H_{k}$ be
the Banach space of functions continuous in $\overline{\mathbb{D}}%
\setminus\left\{  0\right\}  ,$ analytic in $\mathbb{D}\setminus\left\{
0\right\}  ,$ and that have a pole at $z=0$ of order $k;$ the norm being
$\left\Vert f\right\Vert =\max_{\left\vert z\right\vert \leq1}\left\vert
z\right\vert ^{k}\left\vert f\left(  z\right)  \right\vert .$ Let $H$ be the
inductive limit of the $H_{k}$ as $k\rightarrow\infty.$ Consider the
functionals $y_{n}\in H^{\prime}$ given as%
\begin{equation}
\left\langle y_{n},f\right\rangle =f\left(  1/n\right)  \,,\label{Ex.2}%
\end{equation}
that is, $y_{n}=\delta\left(  z-1/n\right)  .$ For each $f\in H$ the finite
part of the limit $\mathrm{F.p.}\lim_{n\rightarrow\infty}\left\langle
y_{n},f\right\rangle =Y\left(  f\right)  $ exists, and equals the finite part
of $f$ at $z=0;$ in fact, if $f\left(  z\right)  =\sum_{j=1}^{k}a_{j}%
z^{-j}+g\left(  z\right)  ,$ where $g$ is analytic at $0,$ then%
\begin{equation}
\left\langle y_{n},f\right\rangle =\sum_{j=1}^{k}a_{j}n^{j}+g\left(
1/n\right)  \,.\label{Ex.3}%
\end{equation}
Observe that the infinite part is $\sum_{j=1}^{k}a_{j}n^{j},$ which has
arbitrary large exponents; here the set (\ref{I.5}) is infinite. Also
$Y\left(  f\right)  =g\left(  0\right)  ,$ the usual finite part of the
analytic function at the pole. Our results will yield the continuity of $Y,$
but one can prove this directly, for example, by observing that $Y\left(
f\right)  =\left(  2\pi i\right)  ^{-1}%
{\displaystyle\oint_{\left\vert z\right\vert =r}}
z^{-1}f\left(  z\right)  \,\mathrm{d}z$ for any $r\in(0,1].$

Interestingly, if $X$ is the space of all analytic function in $\mathbb{D}%
\setminus\left\{  0\right\}  $ with its standard topology, then $H$ is dense
in $X$ and the $y_{n}$'s and $Y$ admit continuous extensions to $X^{\prime}, $
but the extension of $Y$ is \emph{not} the finite part of the limit of the
extensions of the $y_{n}$'s.\smallskip
\end{example}

\begin{example}
\label{Example 3}Let us consider the distributions
\begin{equation}
f_{n}\left(  x\right)  =\sum_{k=1}^{\infty}\left(  n^{1/k}+1\right)  ^{k^{2}%
}\delta\left(  x-k\right)  \,, \label{Ex.4}%
\end{equation}
of the space $\mathcal{D}^{\prime}\left(  \mathbb{R}\right)  .$ If $\phi
\in\mathcal{D}\left(  \mathbb{R}\right)  $ satisfies $\operatorname*{supp}%
\phi\subset(-\infty,k+1)$ then the infinite part of $\left\langle f_{n}%
,\phi\right\rangle $ is the sum of $k^{2}-1$ terms, corresponding to the
exponents $\alpha_{j}=j/k$ for $1\leq j\leq k^{2}.$ Hence the finite part of
the limit is the Dirac comb
\begin{equation}
\mathrm{F.p.}\lim_{n\rightarrow\infty}f_{n}\left(  x\right)  =\sum
_{k=1}^{\infty}\delta\left(  x-k\right)  \,, \label{Ex.5}%
\end{equation}
while $R_{\alpha}\left(  x\right)  \neq0$ precisely when $\alpha$ is a
positive rational number; and actually if $k$ is the smallest integer for
which $\alpha=j/k$ and $j\leq k^{2},$\ then%
\begin{equation}
R_{j/k}\left(  x\right)  =\sum_{q=1}^{\infty}\binom{q^{2}k^{2}}{q\,j}%
\delta\left(  x-qk\right)  \,. \label{Ex.6}%
\end{equation}
One could represent the infinite part of $f_{n}\left(  x\right)  $ as the
infinite sum $\sum_{\alpha\in\mathbb{Q}_{+}}R_{\alpha}\left(  x\right)  ;$ the
set of exponents $\alpha$ for which $R_{\alpha}\neq0$ is the infinite
unbounded set $\mathbb{Q}_{+},$\ but upon evaluation on a test function the
sum becomes finite since $\left\langle R_{\alpha},\phi\right\rangle \neq0$ for
only a finite set of exponents.\smallskip
\end{example}

In the previous example $X=\mathcal{D}\left(  \mathbb{R}\right)  $ is an
\textsl{LF} space, and it is not hard to see that in an \textsl{LF} space\ the
set of exponents $\alpha$ for which $R_{\alpha}\neq0$ is countable at the
most. We can easily construct an example where this set of exponents is the
whole $\left(  0,\infty\right)  .$\smallskip

\begin{example}
\label{Example 3.5}Let $X$ be the space of functions $f:\left(  0,\infty
\right)  \rightarrow\mathbb{R}$ such that the set $\left\{  \alpha\in\left(
0,\infty\right)  :f\left(  \alpha\right)  \neq0\right\}  $ is finite. We give
$X$ the inductive limit topology of the system $\left(  \mathbb{R}^{F}%
,i_{F}\right)  ,$ where $F$ is a finite subset of $\left(  0,\infty\right)  ,$
$F\nearrow,$ and if $f\in\mathbb{R}^{F},$ then $i_{F}\left(  f\right)
=f_{F}\in X$ is given by $f_{F}\left(  \alpha\right)  =f\left(  \alpha\right)
$ if $\alpha\in F$ and $f_{F}\left(  \alpha\right)  =0$ if $\alpha\notin F.$
Theorem \ref{Theorem other 1} applies in $X.$

Let $y_{n}\in X^{\prime}$ be given as
\begin{equation}
y_{n}\left(  x\right)  =\sum_{\alpha\in\left(  0,\infty\right)  }n^{\alpha
}\delta\left(  x-\alpha\right)  \,, \label{Ex.3.5.1}%
\end{equation}
that is $\left\langle y_{n},f\right\rangle $ is the \emph{finite} sum
$\sum_{\alpha\in\left(  0,\infty\right)  }n^{\alpha}f\left(  \alpha\right)  .$
Then $R_{\alpha}\left(  x\right)  =\delta\left(  x-\alpha\right)  $ for all
$\alpha\in\left(  0,\infty\right)  ,$ so that the set (\ref{I.5}) is the whole
$\left(  0,\infty\right)  .$ Notice also that $\mathrm{F.p.}\lim
_{n\rightarrow\infty}y_{n}\left(  x\right)  =0.$\smallskip
\end{example}

\begin{example}
\label{Example 4}Consider the space $X$ whose elements are the continuous
functions in $\left[  0,1\right]  ,$ with the topology of pointwise
convergence on $\left[  0,1\right]  .$ If $0<\beta<1,$ let us consider the
functional $f_{n}\in X^{\prime}$ given by%
\begin{equation}
f_{n}\left(  x\right)  =\sum_{k=0}^{n-1}\delta\left(  x-\frac{k+\beta}%
{n}\right)  \,, \label{Ex.7}%
\end{equation}
that is,%
\begin{equation}
\left\langle f_{n},\phi\right\rangle =\sum_{k=0}^{n-1}\phi\left(
\frac{k+\beta}{n}\right)  \,, \label{Ex.8}%
\end{equation}
for $\phi\in X.$ Then the Euler-Maclaurin formula \cite{Estrada97} yields%
\begin{equation}
\left\langle f_{n},\phi\right\rangle =n\int_{0}^{1}\phi\left(  x\right)
\,\mathrm{d}x+B_{1}\left(  \beta\right)  \left(  \phi\left(  1\right)
-\phi\left(  0\right)  \right)  \,, \label{Ex.9}%
\end{equation}
where $B_{1}\left(  x\right)  =x-1/2$ is the Bernoulli polynomial of order
$1.$ The finite part is%
\begin{equation}
\mathrm{F.p.}\lim_{n\rightarrow\infty}f_{n}\left(  x\right)  =B_{1}\left(
\beta\right)  \left(  \delta\left(  x-1\right)  -\delta\left(  x\right)
\right)  \,, \label{Ex.10}%
\end{equation}
which is actually continuous in $X,$ but $R_{1},$ the coefficient of $n$ in
the infinite part of $\left\langle f_{n},\phi\right\rangle $ is not continuous
since it is given by $\phi\rightsquigarrow\int_{0}^{1}\phi\left(  x\right)
\,\mathrm{d}x,$ which belongs to $X_{\mathrm{al}}^{\prime}$ but not to
$X^{\prime}.$\smallskip
\end{example}

\begin{example}
\label{Example 5}Let $\Omega$\ be a complex region and let $\xi\in\Omega.$
Suppose $y_{\omega}\in X^{\prime}$ is weakly--$\ast$ analytic in $\omega
\in\Omega\setminus\left\{  \xi\right\}  ,$ that is, for each $x\in X$ the
function $\left\langle y_{\omega},x\right\rangle $ is analytic in
$\Omega\setminus\left\{  \xi\right\}  .$ Suppose also that $\left\langle
y_{\omega},x\right\rangle $ has a pole at $\omega=\xi$ for each $x.$

If $X$ is a Fr\'{e}chet space then there exists a fixed number $N$ such that
the order of the pole is $N$ at the most for all $x,$ and the finite part
\begin{equation}
y_{\xi}^{\ast}\left(  x\right)  =\mathrm{F.p.}\lim_{n\rightarrow\infty
}\left\langle y_{\xi+1/n},x\right\rangle \,, \label{Ex.11}%
\end{equation}
is an element of $X^{\prime}.$

If $X$ is an inductive limit of Fr\'{e}chet spaces, then $y_{\xi}^{\ast}$ is
still continuous, but the order of the pole of $\left\langle y_{\omega
},x\right\rangle $\ at $\xi$ does not have to be bounded, that is, maybe
$R_{k}\left(  x\right)  =\mathrm{F.p.}\lim_{n\rightarrow\infty}n^{-k}%
\left\langle y_{\xi+1/n},x\right\rangle $ does not vanish in $X$ for an
infinite number of values of $k.$\smallskip
\end{example}

.


\begin{thebibliography}{99}                                                                                               %


\bibitem {Estrada97}Estrada, R., On the Euler-Maclaurin asymptotic formula,
\textit{Bol. Soc. Mat. Mex.} \textbf{3} (1997), 117--133.

\bibitem {EK85b}Estrada, R. and Kanwal, R. P., Regularization and
distributional derivatives of $\left(  x_{1}^{2}+\cdots x_{p}^{2}\right)
^{n/2}$ in $\mathcal{D}^{\prime}\left(  \mathbb{R}^{p}\right)  ,$
\textit{Proc. Roy. Soc. London A} \textbf{401} (1985), 281--297.

\bibitem {EK88}Estrada, R. and Kanwal, R. P., Regularization, pseudofunction,
and Hadamard finite part, \textit{J. Math. Anal. Appl.} \textbf{141} (1989), 195--207.

\bibitem {GreenBook}Estrada, R. and Kanwal, R.P., \textit{A distributional
approach to Asymptotics. Theory and Applications,} Second Edition,
Birkh\"{a}user, Boston, 2002.

\bibitem {Farassat}Farassat, F., \textit{Introduction to generalized functions
with applications in aerodynamics and aeroacoustics}, NASA Technical Paper
3248 (Hampton, VA: NASA Langley Research Center) (1996); http://ntrs.nasa.gov.

\bibitem {Frahm}Frahm, C.P., Some novel delta-function identities, \textit{Am.
J. Phys.} \textbf{51} (1983), 826--829.

\bibitem {Hadamard}Hadamard, J., \textit{Lectures on Cauchy's Problem in
Linear Partial Differential Equations, }Yale University Press, New Haven,
1923, (reprinted by Dover, New York, 1953).

\bibitem {VH1}Hnizdo, V., Generalized second-order partial derivatives of
$1/r,$ \textit{Eur. J. Phys.} \textbf{32} (2011), 287--297.

\bibitem {Horvath}Horv\'{a}th, J., \textit{Topological Vector Spaces and
Distributions}, vol. I., Addison-Wesley, Reading, Massachusetts, 1966.

\bibitem {Kanwal}Kanwal, R.P., \textit{Generalized Functions: Theory and
Technique,} Third Edition, Birkh\"{a}user, Boston, 2004.

\bibitem {komatsu}Komatsu, H., Projective and injective limits of weakly
compact sequences of locally convex spaces, \emph{J. Math. Soc. Japan}
\textbf{19} (1967), 366--383.

\bibitem {Treves}Tr\`{e}ves, F., \textit{Topological Vector Spaces,
Distributions, and Kernels,} Academic Press, New York, 1967.

\bibitem {Corput}van der Corput, J. G., Neutrices, \emph{J. Soc. Indust. Appl.
Math.}, \textbf{7} (1959), 253--279.

\bibitem {YE}Yang, Y. and Estrada, R., Regularization using different surfaces
and the second order derivatives of $1/r$, \textit{Appl. Anal.}
\textbf{92 }(2013), 246--258.

\bibitem {YE2}Yang, Y. and Estrada, R., Distributions in spaces with thick
points, \textit{J. Math. Anal. Appl. }\textbf{401} (2013), 821--835.
\end{thebibliography}
\end{document}